\documentclass[a4paper]{amsart}

\usepackage{amssymb}
\usepackage{color}
\usepackage[pdfpagelabels]{hyperref}												
\usepackage{enumerate}

\textheight=23.5cm

\newcommand{\chr}{\operatorname{chr}}

\newtheorem{theorem}{Theorem}[section]
\newtheorem{proposition}[theorem]{Proposition}

\theoremstyle{definition}
\newtheorem{definition}[theorem]{Definition}

\theoremstyle{remark}
\newtheorem{remark}[theorem]{Remark}

\begin{document}

\title{Colouring simplicial complexes via the Lechuga-Murillo's model}

\author{David ~M\'endez}
\address[D.\ M\'endez]{School of Mathematical Sciences, University of Southampton, SO17 1BJ, United Kingdom}
\email{D.Mendez-Martinez@soton.ac.uk}
\thanks{The author was partially supported by Ministerio de Econom\'ia y Competitividad (Spain) grants MTM2016-79661-P and MTM2016-78647-P}

\begin{abstract}
	L.\ Lechuga and A.\ Murillo showed that a non-oriented, simple, connected, finite graph $G$ is $k$-colourable if and only if a certain pure Sullivan algebra associated to $G$ and $k$ is not elliptic. In this paper, we extend this result to simplicial complexes by means of several notions of colourings of these objects.
\end{abstract}

\maketitle

\section{Introduction}\label{sec:intro}

Graph Theory and Rational Homotopy Theory were first related by L.\ Lechuga and A.\ Murillo in a celebrated paper \cite{LecMur00} (see also \cite{LecMur01}) where they show that a non-oriented, simple, connected, finite graph can be $k$-coloured, $k\ge 2$, if and only if a certain Sullivan algebra associated to the graph is not elliptic. They also provide a link between Rational Homotopy Theory and algorithmic complexity by proving that the problem of graph colourability can be reduced in polynomial time to the problem of determining the ellipticity of a certain Sullivan algebra. Hence, since the former is an \emph{NP}-complete problem, the latter is an \emph{NP}-hard problem.

This interplay between Graph Theory and Rational Homotopy Theory has been proven fruitful: recently, C.\ Costoya and A.\ Viruel were able to use this interaction to solve a question of realisability of groups \cite{CosVir14, CosVir14b}, and applications of these results to further problems were subsequently found, \cite{CosMenVir18,CosMenVir19}.

The aim of this work is to extend the result of Lechuga and Murillo from graphs to (finite) simplicial complexes by considering eleven notions of colourability for these objects, many of which can be found in the literature. We refer to these colourings as $\mathfrak{C}_i$-colouring,  for $i=1,2, \dots, 11$ (see Definitions \ref{definition:hyperedgeColourings}, \ref{definition:simplicialColourings}, \ref{def:colorDMN}, and \ref{definition:maximalminimalcolourings}), and prove the following two results:

\begin{theorem}\label{theorem:all} 
	For any $k\ge 2$, any $i=1,2,\dots,11$, and any connected simplicial complex $X$, which is assumed to be strongly connected and homogeneous for $i = 8,9,10,11$, there exists a pure Sullivan algebra ${\mathcal M}^i_k (X)$ which is not elliptic if and only if $X$ is $\mathfrak C_i$-$k$-colourable. 
\end{theorem}

\begin{theorem}\label{theorem:NP-hardness}
	For $i\in\{1,7,8,9,10,11\}$ and $k\ge 3$, or for $i\in\{4,5,6\}$ and $k\ge 4$, determining if a connected simplicial complex is $\mathfrak{C}_i$-$k$-colourable is an NP-hard problem.
\end{theorem}

We point out that closely related problems have been studied in \cite{CosVir15,DobMolNot10,MolNor16}.

As for the necessary background, we assume that the reader is familiar with basics of algorithmic complexity and Rational Homotopy Theory, for which \cite{Wil02} and \cite{FelHalTho01} are, respectively, excellent references. In particular, concerning algorithmic complexity we will use that the problems of total-$k$-colourability, $k\ge 4$, edge-$k$-colourability, $k\ge 3$, and $k$-colourability, $k\ge 3$, are \emph{NP}-complete, \cite{Hol81, LevGal83,San89}. 

Regarding Rational Homotopy Theory, we just recall that a (simply connected) \emph{Sullivan algebra}, denoted $(\Lambda W,d)$, is a commutative differential graded algebra, which is free as an algebra generated by the (simply connected) graded rational vector space $W$, and where the differential $d$ is decomposable. A Sullivan algebra is \emph{elliptic} if both $W$ and $H^*(\Lambda W,d)$ are finite dimensional, and \emph{pure} if $d W^{\text{even}}=0$ and $d W^{\text{odd}}\subset \Lambda W^{\text{even}}$.

We now recall the fundamental construction in \cite{LecMur00} associated to any $k\ge 2$ and any non-oriented, simple, connected, finite graph $G = (V,E)$, where $V$ and $E$ respectively denote the sets of vertices and edges of $G$. Consider the pure Sullivan algebra $S_k(G)  =(\Lambda W_{G,k},d)$ where
\begin{align*}
	W_{G,k}^{\text{even}} &= \langle x_v \mid v\in V \rangle , &|x_v|&=2, &d(x_v)&=0,  \\
	W_{G,k}^{\text{odd}} &= \langle y_{(u, v)} \mid (u, v)\in E\rangle,  &|y_{(u, v)}| &= 2k-3,  &d(y_{(u, v)}) &= \Sigma_{l=1}^k x_u^{k-l}x_v^{l-1}.
\end{align*}
For this construction, the following holds:

\begin{theorem}\emph{(\cite[Theorem 3]{LecMur00})}\label{theorem:lechugamurillo}
The graph $G$ is $k$-colourable if and only if the Sullivan algebra $S_k(G)$ is not elliptic.
\end{theorem}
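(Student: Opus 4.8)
The plan is to prove the Lechuga--Murillo theorem by translating the algebraic condition of ellipticity into a purely combinatorial statement about the polynomial system $\{d(y_{v_1v_2})\}_{v_1v_2\in E}$ in $\mathbb{Q}[x_v \mid v\in V]$. The starting observation is that, since $(\Lambda V_{G,k},d)$ is a \emph{pure} Sullivan algebra (all generators of even degree are cocycles, and the differential sends odd generators into the subalgebra generated by the even ones), its rational homotopy type and in particular its ellipticity are governed by the regular-sequence behaviour of the images of the odd generators. Concretely, one uses the standard fact from rational homotopy theory (see \cite{FHT}) that a pure Sullivan algebra with finitely many generators is elliptic if and only if the sequence $d(y_{v_1v_2})$, $v_1v_2\in E$, is a regular sequence in $\mathbb{Q}[x_v]$; equivalently, if and only if the quotient ring $\mathbb{Q}[x_v]/(d(y_{v_1v_2}) : v_1v_2\in E)$ is finite dimensional over $\mathbb{Q}$, i.e. the affine variety $Z$ cut out by these polynomials (over $\overline{\mathbb{Q}}$, or over $\mathbb{C}$) consists only of the origin.

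First I would record the factorisation
\begin{equation*}
d(y_{v_1v_2}) \;=\; \sum_{l=1}^{k} x_{v_1}^{k-l}x_{v_2}^{l-1} \;=\; \frac{x_{v_1}^{k}-x_{v_2}^{k}}{x_{v_1}-x_{v_2}},
\end{equation*}
valid whenever $x_{v_1}\neq x_{v_2}$, so that on the locus $x_{v_1}\neq x_{v_2}$ the equation $d(y_{v_1v_2})=0$ is equivalent to $x_{v_1}^k = x_{v_2}^k$ with $x_{v_1}\neq x_{v_2}$. This is the key bridge to colourings: if $\omega$ is a primitive $k$-th root of unity, then the points of the zero locus with all coordinates nonzero correspond (up to a common scalar) to assignments $v\mapsto \omega^{c(v)}$ with $c(v)\in\mathbb{Z}/k$, and the constraint on an edge $v_1v_2$ is exactly $c(v_1)\neq c(v_2)$. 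Thus a proper $k$-colouring of $G$ produces a one-parameter family of points in $Z$ (the orbit under simultaneous scaling), which is a positive-dimensional component not reducing to $\{0\}$, hence $S_k(G)$ is non-elliptic.

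Conversely I would argue that if $S_k(G)$ is non-elliptic, then $Z\neq\{0\}$, so there is a nonzero point $(a_v)_{v\in V}$ with all $d(y_{v_1v_2})$ vanishing; here the hypothesis that $G$ is connected is used to show that no coordinate can vanish (if some $a_v=0$, then on each edge incident to $v$ the polynomial $\sum_l x_v^{k-l}x_w^{l-1}$ reduces to $x_w^{k-1}$, forcing the neighbour's coordinate to vanish as well, and connectivity propagates this to the whole graph, contradicting $(a_v)\neq 0$). With all coordinates nonzero, each edge relation gives $a_{v_1}^k=a_{v_2}^k$ with $a_{v_1}\neq a_{v_2}$ (the latter because $d(y_{v_1v_2})=0$ and $a_{v_1}^{k-1}\neq 0$ forbid $a_{v_1}=a_{v_2}$, since then $d(y_{v_1v_2})=k a_{v_1}^{k-1}\neq 0$); normalising so that one fixed coordinate equals $1$, every $a_v$ is a $k$-th root of unity $\omega^{c(v)}$, and $c$ is a proper $k$-colouring of $G$. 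I expect the main obstacle to be the careful invocation of the pure-model ellipticity criterion — making precise that ellipticity of this particular Sullivan algebra is equivalent to the polynomials forming a regular sequence, and equivalently to the zero locus being $\{0\}$ — together with the clean handling of the connectivity argument that rules out vanishing coordinates; the root-of-unity bookkeeping is then routine.
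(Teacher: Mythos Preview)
The paper does not contain a proof of this theorem; it is quoted as a result from \cite{lechugamurillo} and used as a black box throughout. Your sketch is correct and is essentially the argument given in the original Lechuga--Murillo paper: reduce ellipticity of the pure Sullivan algebra to the statement that the homogeneous polynomials $d(y_{v_1v_2})$ have only the trivial common zero over $\overline{\mathbb{Q}}$, and then identify nonzero common zeros with proper $k$-colourings via the factorisation $d(y_{v_1v_2})=(x_{v_1}^k-x_{v_2}^k)/(x_{v_1}-x_{v_2})$ together with the connectivity-propagation argument ruling out vanishing coordinates. One minor caution: the phrase ``regular sequence'' is not literally correct when $|E|\neq |V|$; the operative criterion (and the one you actually use) is that $\mathbb{Q}[x_v]/(d(y_e):e\in E)$ be finite dimensional, equivalently that the affine zero locus reduce to $\{0\}$.
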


To relate this result with algorithmic complexity it is convenient to keep in mind that a graph $G = (V,E)$ is usually encoded by its \emph{adjacency matrix} $A=(a_{ij})_{i,j\in V}$ in which $a_{ij}=1$ if $(i, j)\in E$ and $a_{ij}=0$ otherwise. In binary, the codification of this matrix has length $\log_2 n+n^2$, where $n$ is the number of vertices of $G$.

Throughout this paper, every considered simplicial complex $X$ is assumed to be finite. The dimension of a simplex $\sigma\in X$, denoted $\dim\sigma$, is its cardinality minus one. The \emph{dimension} of $X$, denoted $\dim X$, is the dimension of any of its largest simplices. Given $s\ge 0$, we denote the set of simplices of $X$ of dimension $s$ by $X^s$. In particular, $X^0$ is the set of vertices of $X$, which is often denoted by $V$. The \emph{$s$-skeleton} of $X$ is the subsimplicial complex of X spanned by $X^s$, and we denote it by $X^{(s)}$. Note that $X^{(1)}$ is trivially identified to a non-oriented, simple graph, and we say that $X$ is \emph{connected} if $X^{(1)}$ is a connected graph.

\section{Models for colourings of connected simplicial complexes}

In the spirit of Theorem \ref{theorem:lechugamurillo}, we will associate to finite, connected simplicial complexes precise pure Sullivan algebras whose ellipticity encode different notions of colouring of simplicial complexes.

\subsection{Colourings arising from hypergraphs}\label{sub: hypergraphcolour}

Recall that a \emph{hypergraph} is a pair $H = (V,E)$ formed by a non-empty set of vertices $V$ and a set of hyperedges $E$, each of them being a non-empty subset of $V$. Two vertices are adjacent if they belong to a common hyperedge. An hyperedge $e$ is incident to a vertex $v$ if $v\in e$. Two hyperedges $e$ and $e'$ are adjacent if $e\cap e'\ne\emptyset$. The hypergraph $H$ is connected if given any two vertices $u,v\in V$ there is a sequence of hyperedges $e_1,e_2,\dots,e_n$ such that $u\in e_1$, $v\in e_n$ and $e_i$ is adjacent to $e_{i+1}$, for $i=1,2,\dots,n-1$.

A \emph{vertex $k$-colouring} of a hypergraph $H = (V,E)$, see \cite[\textsection3.1]{Bre13}, is a map $\varphi\colon V\to \{1,2,\dots,k\}$ such that for any hyperedge $e$ of more than one vertex $|\varphi(e)| > 1$. Namely, at least two vertices of $e$ have different colours. Moreover, if for any $e\in E$ and any two different vertices $u, v\in e$ we have that $\varphi(u)\ne\varphi(v)$, we say that $\varphi$ is a \emph{strong} vertex $k$-colouring.

On the other hand, \cite[\textsection3.2.5]{Bre13} a \emph{hyperedge colouring} for $H$ is a map $\varphi\colon E\to \{1,2,\dots,k\}$ such that $\varphi(e)\ne\varphi(e')$ for any pair of different but adjoint hyperedges $e$ and $e'$.

Finally, \cite{Cow97}, a \emph{total colouring} of $H$ is a map $\varphi \colon V\cup E \to \{1,2,\dots,k\}$ such that any pair formed by either two adjacent vertices, two adjacent hyperedges or an hyperedge and any of its incident vertices have different images through $\varphi$.

Trivially, a simplicial complex $X$ can be regarded as a hypergraph $H = (V,E)$ where $V = X^0$ and $E = X$. Hence, the above notions of colourability automatically translate to the following definition. Note that a vertex $k$-colouring of a simplicial complex is always a strong vertex $k$-colouring.

\begin{definition}\label{definition:hyperedgeColourings}
	Let $X$ be a simplicial complex.
	\begin{enumerate}
		\item A \emph{vertex $k$-colouring} of $X$ ($\mathfrak{C}_1$-$k$-colouring) is a map $\varphi\colon V\to\{1,2,\dots,k\}$ such that if $\sigma\in X$ and $u,v\in\sigma$, $u\ne v$, then $\varphi(u)\ne\varphi(v)$.

		\item A \emph{face $k$-colouring} of $X$ ($\mathfrak{C}_2$-$k$-colouring) is a map $\varphi\colon X\to\{1,2,\dots,k\}$ such that $\varphi(\sigma)\ne\varphi(\tau)$ whenever $\sigma\ne\tau$, $\sigma\cap\tau\ne\emptyset$.

		\item A \emph{total $k$-colouring} of $X$ ($\mathfrak{C}_3$-$k$-colouring) is a map $\varphi\colon X\to\{1,2,\dots,k\}$ such that $\varphi(u)\ne\varphi(v)$ for any $u,v\in V$ with $\{u,v\}\in X$, and $\varphi(\sigma)\ne\varphi(\tau)$ for any pair of different simplices $\sigma,\tau$ with non-empty intersection.
	\end{enumerate}
\end{definition}

Note that a total $k$-colouring yields both a vertex $k$-colouring and a face $k$-colouring. We prove:

\begin{proposition}\label{proposition:hyperedgeColourings}
	For any simplicial complex $X$ and any $i =1,2,3$, there is a pure Sullivan algebra $\mathcal{M}_k^i(X)$ which is not elliptic if and only if $X$ is $\mathfrak{C}_i$-$k$-colourable.
\end{proposition}

\begin{proof}
	Associated to $X$ consider $G_1 = X^{(1)}$ the graph given by its $1$-skeleton. On the other hand, let $G_2$ be the graph whose vertex set is the set of simplices of $X$ and whose edges are pairs of distinct simplices with a common face. Finally, let $G_3$ be the graph whose vertex set is again the set of simplices of $X$ and whose edges are also pairs of distinct simplices with non-empty intersection, together with pairs of vertices giving raise to a $1$-simplex. Observe that $G_1$, $G_2$ and $G_3$ are respectively the \emph{$2$-section graph}, \emph{intersection graph} and \emph{total graph} of the hypergraph given by $X$ (see \cite{Bre13,Cow97}).

	It is then clear from Definition \ref{definition:hyperedgeColourings} that a $\mathfrak{C}_i$-$k$-colouring of $X$ is precisely a $k$-colouring of $G_i$, $i = 1,2,3$. Furthermore, the graphs $G_1$, $G_2$ and $G_3$ are connected as a consequence of $X$ being connected. To finish, define $\mathcal{M}_k^i(X) = S_k(G_i)$ and apply Theorem \ref{theorem:lechugamurillo}.
\end{proof}

\subsection{Colourings of simplicial complexes}\label{sub: simplexcolour}

The colourings in \textsection\ref{sub: hypergraphcolour} are originally defined for hypergraphs, thus they do not take consideration of the additional structure of simplicial complexes. For that reason, we introduce the following:

\begin{definition}\label{definition:ascendingDescending}
	Let $X$ be a simplicial complex.
	\begin{enumerate}
		\item An \emph{ascending $k$-colouring of $X$ in dim $r$} is a map $\varphi\colon X^r \to \{1,2,\dots,k\}$ such that if $\sigma,\tau\in X^r$, $\sigma\cup\tau\in X^{r+1}$, then $\varphi(\sigma)\ne\varphi(\tau)$. 

		\item A \emph{descending $k$-colouring of $X$ in dim $r$} is a map $\varphi\colon X^r \to \{1,2,\dots,k\}$ such that if $\sigma,\tau\in X^r$, $\sigma\cap\tau\in X^{r-1}$, then $\varphi(\sigma)\ne\varphi(\tau)$.
	\end{enumerate}

	We denote the respective chromatic numbers by $\chi_r(X)$ and $\chi'_r(X)$.
\end{definition}

An ascending $k$-colouring of $X$ in dim $r$ is a colouring of the graph 
\begin{equation}\label{eq:asc}
	G_r(X)=\big(X^r,\{(\sigma,\tau)\mid \sigma\cup\tau \in X^{r+1}\}\big),
\end{equation} whereas a descending $k$-colouring of $X$ in dim $r$ is a colouring of 
\begin{equation}\label{eq:desc}
	G'_r(X)=\big(X^r,\{(\sigma,\tau)\mid \sigma\cap\tau\in X^{r-1}\}\big),
\end{equation} 
called the \emph{$r$-th exchange graph} of $X$ (see \cite{Gru69}). However, Theorem \ref{theorem:lechugamurillo} cannot be used to model the colourings in Definition \ref{sub: hypergraphcolour} using these graphs, as they may not be connected. We treat this issue in \textsection\ref{sub: strongly}. 

Instead, in this section we use the ascending and descending colourings to introduce new colourings which we can model in the spirit of Proposition \ref{proposition:hyperedgeColourings}.

\begin{definition}\label{definition:simplicialColourings}
Let $X$ be a simplicial complex.

	\begin{enumerate}
		\item A \emph{complete ascending $k$-colouring} of $X$ ($\mathfrak{C}_4$-$k$-colouring) is a map $\varphi\colon X\to\{1,2,\dots,k\}$ such that, for any $r,s\in\{0,1,\dots,\dim X\}$, if $\sigma,\tau\in X^r$, $\sigma\cup\tau\in X^{r+1}$, or if $\sigma\in X^r$, $\tau\in X^s$, $r\ne s$, then $\varphi(\sigma)\ne\varphi(\tau)$.

		\item A \emph{complete descending $k$-colouring of $X$}  ($\mathfrak{C}_5$-$k$-colouring) is a map $\varphi\colon X\to \{1,2,\dots,k\}$ such that, for any $r,s\in \{0,1,\dots,\dim X\}$, if $\sigma,\tau\in X^r, \sigma\cap\tau\in X^{r-1}$, or if $\sigma\in X^r, \tau\in X^s, r\ne s$, then $\varphi(\sigma)\ne\varphi(\tau)$.

		\item A map $\varphi\colon X \to \{1,2,\dots,k\}$ is a \emph{full $k$-colouring of $X$}  ($\mathfrak{C}_6$-$k$-colouring) if for $\sigma,\tau\in X$ such that $\sigma\subset \tau$, or $\sigma,\tau\in X^0$, $\sigma\cup\tau\in X^1$, or for $1\le r\le\dim X$, $\sigma,\tau\in X^r$, $\sigma\cap\tau\in X^{r-1}$, we have that $\varphi(\sigma)\ne\varphi(\tau)$.
	\end{enumerate}
\end{definition}

Let $G_1 = (V_1, E_1)$ and $G_2 = (V_2, G_2)$ be two graphs. Recall that the sum of $G_1$ and $G_2$ is a graph $G = G_1+G_2$ with vertex set $V_1\sqcup V_2$ and edges $E_1\cup E_2\cup\{(u,v)\mid u\in V_1, v\in V_2\}$. The sum of any two graphs is connected. Also recall that the union of $G_1$ and $G_2$ is the graph $G_1\cup G_2$ with vertex set $V_1\cup V_2$ and edges $E_1\cup E_2$. 

\begin{proposition}\label{proposition:simplicialColourings}
	For any simplicial complex $X$ and any $i=4,5,6$, there is a pure Sullivan algebra $\mathcal{M}_k^i(X)$ which is not elliptic if and only if $X$ is $\mathfrak{C}_i$-$k$-colourable.
\end{proposition}

\begin{proof}
	First, note that a complete ascending (resp.\ descending) $k$-colouring of $X$ is an ascending (resp.\ descending) $k$-colouring of $X$ in dim $r$ when restricted to $X^r$. Furthermore, simplices of different dimensions receive different colours. It becomes clear that if we define
	\begin{align*}
		G_4 & = G_0(X) + G_1(X) + \dots + G_{\dim X}(X), \\
		G_5 & = G'_0(X) + G'_1(X) + \dots + G'_{\dim X}(X), 
	\end{align*}
	$X$ admits a complete ascending (resp.\ descending) $k$-colouring if and only if the connected graph $G_4$ (resp.\ $G_5$) is $k$-colourable.

	Regarding the full $k$-colouring, let $I$ denote the strict inclusion graph of $X$, that is, a graph with vertex set $X$ and where $(\sigma,\tau)$ is an edge if and only if either $\sigma\subset\tau$ or $\tau\subset\sigma$. Define a graph
		\[G_6 = I \cup \big(G_0(X)\sqcup G_1'(X)\sqcup \dots \sqcup G'_{\dim X}(X)\big).\]
	Then $G_6$ is connected since $I$ is so. Furthermore, $X$ is full-$k$-colourable if and only if $G_6$ is $k$-colourable. To finish, define $\mathcal{M}_k^i(X) = S_k(G_i)$, $i=4,5,6$, and apply Theorem \ref{theorem:lechugamurillo}.
\end{proof}

We model one last colouring in this section. In \cite{DobMolNot10} the authors introduce the following, more relaxed definition of vertex colouring:

\begin{definition}\label{def:colorDMN} 
	Let $k, s \geq 1$ and let $X$ be a simplicial complex. A $(k, s)$-colouring of $X$ ($\mathfrak{C}_7$-$(k,s)$-colouring) is a map $f: V \rightarrow \{1,2,\dots,k\}$ such that, for every $\sigma \in X $ and for all $1\le t \le k$, $| \sigma \cap f^{-1}(t) | \leq s$. Let $\chr^s(X)$ denote the least integer $k$ such that $X$ is $(k,s)$-colourable.
\end{definition}

A Sullivan algebra whose ellipticity codifies the $(k,s)$-colourability of a simplicial complex had already been obtained in \cite{CosVir15}. However, we can use the work in \cite{MolNor16} to provide a different construction of one such algebra:

\begin{proposition}\label{proposition:kscolouring} 
	For any simplicial complex $X$ there exists a pure Sullivan algebra $\mathcal{M}_{k,s}^7(X)$ which is not elliptic if and only if $X$ is $\mathfrak{C}_7$-$(k,s)$-colourable.
\end{proposition}

\begin{proof}
	In \cite[Theorem 2]{MolNor16} the authors show that 
	\begin{equation*}
    	\chr^s (X) = \min_{P \in \mathrm{BCP}^s(X)} \chr^1 \big(G_0(P)\big),
	\end{equation*}
	where $\mathrm{BCP}^s(X)$ is a set of partitions of the vertex set of $X$ and $G_0(P)$ is a $1$-dimensional simplicial complex associated to one such partition $P$, see \cite[Definition 3]{MolNor16}. It quickly follows that when regarding $G_0(P)$ as a graph,  $\chr^1 \big(G_0(P)\big)=\chi\big(G_0(P)\big)$. Furthermore, $G_0(P)$ is connected for every $P\in \text{BCP}^s(X)$. Define
		\[\mathcal{M}_{k,s}^7(X) = \bigotimes_{P \in \mathrm{BCP}^s(X)}   S_k\big(G_0(P)\big).\]
	Let us show that $\mathcal{M}_{k,s}^{7}(X)$ is the desired algebra.

	Recall that the tensor product of Sullivan algebras is not elliptic if and only if at least one of the factors is not elliptic. Therefore, if $\mathcal{M}_{k,s}^7(X)$ is not elliptic, there exists $P\in\mathrm{BCP}^s(X)$ such that $S_k\big(G_0(P)\big)$ is not elliptic. Then by Theorem \ref{theorem:lechugamurillo} $G_0(P)$ is $k$-colourable, so $\chi\big(G_0(P)\big)=\chr^1\big(G_0(P)\big)\le k$, thus $X$ is $(k,s)$-colourable. Reciprocally, if $\mathcal{M}_{k,s}^7(X)$ is elliptic, then $S_k\big(G_0(P)\big)$ is elliptic for every $P\in \textrm{BCP}^s(X)$. Therefore, $G_0(P)$ is not $k$-colourable, meaning that $\chi\big(G_0(P)\big)=\chr^1\big(G_0(P)\big) > k$, for every $P\in \textrm{BCP}^s$. Therefore, $X$ is not $(k,s)$-colourable. 
\end{proof}

\section{Models for colourings of strongly connected homogeneous simplicial complexes}\label{sub: strongly}

As mentioned in \textsection\ref{sub: simplexcolour},  the colourings in Definition \ref{definition:ascendingDescending} cannot be immediately modelled since the graphs that encode them, $G_r(X)$ (see \eqref{eq:asc}) and and $G_r'(X)$ (see \eqref{eq:desc}), are not necessarily connected. In this section we further restrict the class of simplicial complexes that we are considering as to be able to model these colourings. 

Recall that a simplicial complex $X$ of dimension $\dim X = n$ is \emph{strongly connected} if for any two $n$-dimensional simplices $\sigma$, $\tau$ there exist $\{\sigma_0 = \sigma, \sigma_1,\dots,\sigma_k = \tau\}\subset X^n$ such that $\sigma_{i-1}\cap\sigma_i\in X^{n-1}$, for $i = 1,2,\dots,k$. Equivalently, $X$ is strongly connected if and only if $G'_{\dim(X)}$ is connected. On the other hand, $X$ is \emph{homogeneous} if every vertex is contained in an $n$-dimensional simplex. Then, if $X$ is homogeneous and strongly connected, so is $X^{(k)}$, for $0\le k \le n$. Therefore:

\begin{proposition}\label{prop:grconexos}
	For any $n$-dimensional strongly connected homogeneous simplicial complex $X$,  $G_r(X)$ and $G'_s(X)$ are connected, for $0\le r < n$ and $0 < s \le n$.
\end{proposition}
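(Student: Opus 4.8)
The plan is to reduce both assertions to the fact recorded just before the proposition — that a strongly connected complex $Y$ of dimension $m$ has $G'_{\dim Y}(Y)$ connected — by passing to skeleta, where I will repeatedly use the already-noted fact that $X^{(k)}$ is again strongly connected and homogeneous, of dimension $k$, for $0\le k\le n$.

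\textbf{The case of $G'_s(X)$, $0<s\le n$.} Observe that $G'_s(X)$ depends only on $X^{(s)}$: its vertex set is $X^s$ and its edges are governed by intersections that are $(s-1)$-faces, all of which live in the $s$-skeleton. Hence $G'_s(X)=G'_s\bigl(X^{(s)}\bigr)=G'_{\dim X^{(s)}}\bigl(X^{(s)}\bigr)$. Since $X^{(s)}$ is strongly connected, the stated characterization ($Y$ strongly connected $\iff$ $G'_{\dim Y}(Y)$ connected) applied to $Y=X^{(s)}$ gives that $G'_s(X)$ is connected, with essentially no further work.

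\textbf{The case of $G_r(X)$, $0\le r<n$.} Likewise $G_r(X)=G_r\bigl(X^{(r+1)}\bigr)$ depends only on $X^{(r+1)}$, which is strongly connected and homogeneous of dimension $m:=r+1\le n$. So it suffices to prove: if $Y$ is strongly connected and homogeneous with $\dim Y=m$, then $G_{m-1}(Y)$ is connected. Two elementary observations drive this. First, for any single $m$-simplex $F$ of $Y$, the $(m-1)$-faces of $F$ form a clique in $G_{m-1}(Y)$, because two distinct $m$-element subsets of the $(m+1)$-element set $F$ have union equal to $F$, hence are adjacent in $G_{m-1}(Y)$. Second, by homogeneity (every simplex of $Y$ lies in a maximal, i.e.\ $m$-dimensional, simplex) every $(m-1)$-simplex of $Y$ is a face of some $m$-simplex. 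Now given $\sigma,\tau\in Y^{m-1}$, pick $m$-simplices $F\supset\sigma$ and $F'\supset\tau$ and use strong connectivity to get a chain $F=F_0,F_1,\dots,F_l=F'$ of $m$-simplices with $F_i\cap F_{i+1}\in Y^{m-1}$. Then
$$\sigma,\; F_0\cap F_1,\; F_1\cap F_2,\; \dots,\; F_{l-1}\cap F_l,\; \tau$$
is a walk in $G_{m-1}(Y)$: each pair of consecutive terms is either equal or consists of two $(m-1)$-faces of a common $F_i$, hence adjacent by the clique observation. Thus $G_{m-1}(Y)$ is connected, and this yields the statement for $G_r(X)$ (in particular for $G_0(X)$, taking $m=1$).

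The routine points — that the skeleton is again a simplicial complex, the one-line cardinality computation behind the clique property, and that a strong-connectivity chain of top-dimensional simplices with consecutive codimension-one intersections is literally a walk in $G'_m$ — I would dispatch quickly. I expect the main obstacle to be purely organisational: setting up the skeleton reduction so that the problem is always moved to the top dimension, and pinpointing that homogeneity is precisely what guarantees every face of the relevant dimension sits inside a facet, which is what lets the chain from strong connectivity be converted into the walks above; the dimension bookkeeping (and invoking that $X^{(k)}$ inherits both strong connectivity and homogeneity) is the one spot that needs care.
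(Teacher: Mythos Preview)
Your proof is correct and follows essentially the same route as the paper: both arguments handle $G'_s(X)$ by reducing to the strong connectivity of $X^{(s)}$, and handle $G_r(X)$ by extending $\sigma,\tau$ to $(r+1)$-simplices via homogeneity, taking a strong-connectivity chain in $X^{(r+1)}$, and using the observation that the $r$-faces of a fixed $(r+1)$-simplex form a clique in $G_r(X)$. The only difference is organisational --- you first pass to the skeleton and then argue at the top dimension, whereas the paper keeps $r$ general throughout --- but the content is identical.
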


\begin{proof}
	The connectivity of $G'_s(X)$, for $0< s \le n$ is an immediate consequence of the strong connectivity of $X^{(s)}$. Let us prove the connectivity of $G_r(X)$, $0\le r < n$. Take $\sigma,\tau\in X^r$. Since $X$ is homogeneous, we can find $\bar{\sigma},\bar{\tau}\in X^{r+1}$ such that $\sigma\subset\bar{\sigma}$ and $\tau\subset\bar{\tau}$. Then, since $X^{(r+1)}$ is strongly connected, we can find $\{\bar{\sigma}_0 = \bar{\sigma}, \bar{\sigma}_1,\dots,\bar{\sigma}_k = \bar{\tau}\}\subset X^{r+1}$ such that $\sigma_i = \bar{\sigma}_{i-1}\cap\bar{\sigma}_{i}\in X^r$, $i = 1,2,\dots,k$. It is now immediate to check that $\sigma \sigma_1 \dots \sigma_{k} \tau$ is a path in $G_r(X)$ joining $\sigma$ and $\tau$.
\end{proof}

 An immediate application of Theorem \ref{theorem:lechugamurillo} yields the following result:

\begin{proposition}
 	For any $n$-dimensional strongly connected homogeneous simplicial complex $X$ and for $0\le r <n$ (resp.\ for $0<s\le n$), there exists a Sullivan algebra $\mathcal{M}_k (X,r)$ (resp.\ $\mathcal{M}'_k(X,s)$) which is not elliptic if and only if $X$ admits an ascending $k$-colouring in dim $r$ (resp.\ a descending $k$-colouring in dim $s$).
\end{proposition}

We now introduce the last collection of colourings.
\begin{definition}\label{definition:maximalminimalcolourings}
	We say that a map $\varphi\colon X\to \{1,2,\dots,k\}$ is:
	\begin{itemize}
		\item a \emph{maximal ascending $k$-colouring} ($\mathfrak{C}_8$-$k$-colouring) if for every $0\le r \le \dim X$ the restriction $\varphi_{|X^r}$ is an ascending $k$-colouring in dim $r$ for $X$.
		\item a \emph{maximal descending $k$-colouring} ($\mathfrak{C}_{9}$-$k$-colouring) if for every $0\le s \le \dim X$ the restriction $\varphi_{|X^s}$ is a descending $k$-colouring in dim $s$ for $X$.
		\item a \emph{minimal ascending $k$-colouring} ($\mathfrak{C}_{10}$-$k$-colouring) if there exists $0\le r < \dim X$ such that $\varphi_{|X^r}$ is an ascending $k$-colouring in dim $r$ for $X$.
		\item a \emph{minimal descending $k$-colouring} ($\mathfrak{C}_{11}$-$k$-colouring) if there exists $0 < s \le \dim X$ such that $\varphi_{|X^s}$ is a descending $k$-colouring in dim $s$ for $X$.
	\end{itemize}
	The respective chromatic numbers are denoted $\chi_{\max}(X)$, $\chi'_{\max}(X)$, $\chi_{\min}(X)$ and $\chi'_{\min}(X)$.
\end{definition}

Let $G_1 = (V_1, E_1)$ and $G_2 = (V_2, G_2)$ be two graphs. The cartesian product $G_1\square G_2$ is a graph with vertex set $V_1\times V_2$ and edge set $\{\big((u_1,u_2),(v_1,v_2)\big)\mid\text{$u_1 = v_1$ and $(u_2,v_2)\in E_2$ or $(u_1,v_1)\in E_1$ and $u_2 = v_2$}\}$. Note that $\chi(G_1\square G_2)=\max\big\{\chi(G_1),\chi(G_2)\big\}$ (see \cite[Theorem 26.1]{HamImrKla11}). Furthermore, the cartesian product of connected graphs is connected (\cite[Corollary 5.3]{HamImrKla11}). Then:

\begin{proposition}\label{proposition:maximalMinimal}
	For any simplicial complex $X$ and any $i = 8,9,10,11$, there is a pure Sullivan algebra $\mathcal{M}_k^i(X)$ which is not elliptic if and only if $X$ is $\mathfrak{C}_i$-$k$-colourable.
\end{proposition}

\begin{proof}
	Note that any map $X^{\dim(X)}\to\{1,2,\dots,k\}$ (resp.\ $X^0\to\{1,2,\dots,k\}$) is an ascending colouring in dimension $\dim(X)$ (resp.\ a descending colouring in dimension $0$). Then, $\chi_{\dim(X)}(X) = \chi'_0(X)=1$. It follows immediately from Definition \ref{definition:maximalminimalcolourings} that $\chi_{\max}(X) = \max_{0 \le r < \dim X} \{\chi_r(X)\}$ and that $\chi'_{\max}(X) = \max_{0 < s\le\dim X}\{\chi'_s(X)\}$. Consider the graphs
	\[G_8 = G_0(X)\square G_1(X)\square\cdots\square G_{n-1}(X), \ G_9 = G'_1(X)\square G'_2(X)\square\cdots\square G'_n(X).\] 
	Then, since $\chi\big(G_r(X)\big) = \chi_r(X)$ and $\chi\big(G'_s(X)\big) =\chi'_s(X)$, we deduce that $\chi(G_8) = \chi_{\max}(X)$ and $\chi(G_9) = \chi'_{\max}(X)$, so $X$ admits a maximal ascending (resp.\ descending) $k$-colouring if an only if $G_8$ (resp.\ $G_9$) is $k$-colourable. Furthermore, both $G_8$ and $G_9$ are connected as a consequence of Proposition \ref{prop:grconexos}. Therefore, for $i = 8,9$ it suffices to define $\mathcal{M}_k^i(X) = S_k(G_i)$ and apply Theorem \ref{theorem:lechugamurillo}.

	We now consider the minimal colourings. It follows from Definition \ref{definition:maximalminimalcolourings} that $\chi_{\min}(X) = \min_{0\le r < \dim X}\{\chi_r(X)\}$ and that $\chi'_{\min}(X) = \min_{0 < s \le \dim X}\{\chi'_r(X)\}$. By a reasoning analogous to that of Proposition \ref{proposition:kscolouring}, the desired algebras are
	\[\mathcal{M}_k^{10}(X) = S_k\big(G_0(X)\big) \otimes S_k\big(G_1(X)\big)\otimes \dots \otimes S_k\big(G_{n-1}(X)\big),\]
	\[\mathcal{M}_k^{11}(X) = S_k\big(G'_1(X)\big) \otimes S_k\big(G'_2(X)\big)\otimes \dots \otimes S_k\big(G'_{n}(X)\big).\]
	The result follows.
	\end{proof}

Theorem \ref{theorem:all} now follows immediately from Propositions \ref{proposition:hyperedgeColourings}, \ref{proposition:simplicialColourings}, \ref{proposition:kscolouring} and \ref{proposition:maximalMinimal}.

\section{Algorithmic complexity of simplicial complex colourings}

If $G$ is a graph, it can be regarded as a simplicial complex $X(G)$ whose $0$-simplices and $1$-simplices are, respectively, the vertices and edges of $G$. Such a simplicial complex can be encoded using an adjacency matrix, so its codification has the same length as that of $G$. 

In this section we show that the (edge, total) colourability of a graph $G$ is equivalent to the $\mathfrak{C}_i$-colourability of $X(G)$ for certain indices $i$. As a consequence, we immediately deduce Theorem \ref{theorem:NP-hardness}.

\begin{remark}\label{remark:simpleNP}
	It is immediate that the $k$-colourability of a graph $G$ is equivalent both to the $\mathfrak{C}_1$-$k$-colourability and the $\mathfrak{C}_7$-$(k,1)$-colourability of $X(G)$. Similarly, the total $k$-colourability of $G$ is equivalent to the $\mathfrak{C}_6$-$k$-colourability of $X(G)$. 
\end{remark}

\begin{proposition}\label{proposition:vertexNP}
	The $k$-colourability of a graph $G$ is equivalent both to the $\mathfrak{C}_4$-$(k+1)$-colourability and the $\mathfrak{C}_i$-$k$-colourability, $i = 8,10$, of $X = X(G)$.
\end{proposition}

\begin{proof}
	We begin with the $\mathfrak{C}_4$-colourability. Let $\psi\colon V\to \{1,2,\dots,k\}$ be a $k$-colouring of $G$. Then, the map $\varphi\colon X\to \{1,2,\dots,k+1\}$ defined by
		\[\varphi(\sigma) = \begin{cases}
			\psi(\sigma), & \text{if $\sigma\in X^0$}, \\
			k+1, & \text{if $\sigma \in X^1$}.
		\end{cases}\]
	is a $\mathfrak{C}_4$-$(k+1)$-colouring of $X$. Reciprocally, if $\varphi\colon X\to \{1,2,\dots,k+1\}$ is a $\mathfrak{C}_4$-$(k+1)$-colouring of $X$, we may assume that at least one $1$-simplex receives image $k+1$, so $k+1\not\in\varphi(X^0)$. The map $\psi\colon X^0 = V\to\{1,2,\dots,k\}$ taking $v$ to $\psi(v) = \varphi(\{v\})$ is a $k$-colouring of $G$.

	We now consider the $\mathfrak{C}_i$-$k$-colourability, $i = 8,10$. First, if $\psi\colon V \to \{1,2,\dots,k\}$ is a $k$-colouring of $G$, $X$ admits a $\mathfrak{C}_i$-$k$-colouring defined by
	\[
		\varphi(\sigma) = \begin{cases}
		\psi(\sigma), & \text{if $\sigma \in X^0$}, \\
		k, & \text{if $\sigma \in X^1$}.
		\end{cases}
	\]
	Reciprocally, if $\varphi\colon X\to \{1,2,\dots,k\}$ is a $\mathfrak{C}_i$-$k$-colouring of $X$, $i = 8,10$, the restriction $\psi =\varphi_{|X^0} \colon X^0= V \to \{1,2,\dots,k\}$ is a $k$-colouring of $G$.
\end{proof}

\begin{proposition}\label{proposition:edgeNP}
	The edge $k$-colourability of a graph $G$ is equivalent both to the $\mathfrak{C}_5$-$(k+1)$-colourability and the $\mathfrak{C}_i$-$k$-colourability, $i=9,11$, of $X=X(G)$.
\end{proposition}

\begin{proof}
	We begin with the $\mathfrak{C}_5$-$(k+1)$-colourability. If $\psi\colon E\to \{1,2,\dots,k\}$ is an edge $k$-colouring of $G$, the map $\varphi\colon X(G) = X\to \{1,2,\dots,k+1\}$ defined by
		\[\varphi(\sigma)=\begin{cases}
		\psi(\sigma), & \text{if $\sigma \in X^1$}, \\
		k+1, & \text{if $\sigma \in X^0$}.
		\end{cases}\]
	is a $\mathfrak{C}_5$-$(k+1)$-colouring of $X$. Reciprocally, if $\varphi\colon X\to \{1,2,\dots,k+1\}$ is a $\mathfrak{C}_5$-$(k+1)$-colouring of $X$, we may suppose that at least one 0-simplex receives image $k+1$, thus $k+1\not\in\varphi(X^1)$. Then, the map $\psi = \varphi_{|X^1}\colon X^1 = E\to \{1,2,\dots,k\}$ is an edge $k$-colouring of $G$.

	We continue with the $\mathfrak{C}_i$-$k$-colourability, $i=9,11$. If $\psi\colon E\to \{1,2,\dots,k\}$ is an edge $k$-colouring of $G$, $X$ admits a $\mathfrak{C}_i$-$k$-colouring $\varphi \colon X\to \{1,2,\dots,k\}$ defined by
		\[\varphi(\sigma)= \begin{cases}
		\psi(\sigma), & \text{if $\sigma \in X^1$},\\
		k, & \text{if $\sigma \in X^0$}.
		\end{cases}\]
	Reciprocally, if $\varphi\colon X\to \{1,2,\dots,k\}$ is a $\mathfrak{C}_i$-$k$-colouring of $X$, $i = 9,11$, the restriction $\psi =\varphi_{|X^1} \colon X^1= E \to \{1,2,\dots,k\}$ is an edge $k$-colouring of $G$.
\end{proof}

Finally, Theorem \ref{theorem:NP-hardness} follows immediately from Remark \ref{remark:simpleNP}, Proposition \ref{proposition:vertexNP}, Proposition \ref{proposition:edgeNP} and the algorithmic complexity of the problem of (edge, total) $k$-colourability of graphs.

\section*{Acknowledgements}
The author is thankful to J.\ M.\ Carball\'es, C.\ Costoya and A.\ Viruel for their insightful comments and guidance. The author is also thankful to the referee, whose comments helped improve the presentation of this work.

\end{document}